\documentclass[11pt,letterpaper]{amsart}
\usepackage[left=2cm,tmargin=2cm,bmargin=2cm,right=2cm]{geometry}

\usepackage{graphicx} 
\usepackage{tikz}
\usepackage{amssymb}
\usepackage{amsmath}
\usepackage{amsthm}
\usepackage{tikz-cd}
\usepackage{array}
\usepackage{mathtools}
\usepackage{adjustbox}
\usepackage{float}
\usetikzlibrary {positioning}
\usepackage{biblatex}
\newcolumntype{P}[1]{>{\centering\arraybackslash}p{#1}}

\newtheorem{theorem}{Theorem}

\newcommand{\R}{\mathbb{R}}

\begin{document}

\title{A note on right-angled, ideal hyperbolic polyhedra}

\author{Ian Whitehead}
\email{whiteh2@stolaf.edu}
\address{Department of Mathematics, Statistics, and Computer Science, St. Olaf College, Northfield, MN}

\begin{abstract}
    We prove a necessary and sufficient condition for an abstract polyhedron $\Pi$ to be realized as a right-angled, ideal polyhedron in hyperbolic space.
\end{abstract}
\maketitle
\section{Introduction}

Hyperbolic polyhedra which are \emph{right-angled} (all dihedral angles $=\tfrac{\pi}{2}$) and \emph{ideal} (all vertices at $\infty$) play a special role in three-dimensional hyperbolic geometry. In \cite{Andreev} (see also \cite{RHD}), Andreev gives combinatorial conditions for an abstract polyhedron $\Pi$ to be realizable in $\mathbb{H}^3$ with given dihedral angles. Specialized to the case of right-angled, ideal polyhedra, Andreev's theorem is as follows:
\begin{theorem}[\cite{Andreev}, Theorem 1.2] \label{Theorem1}
An abstract polyhedron $\Pi$ can be realized as a right-angled, ideal hyperbolic polyhedron if and only if it satisfies the following conditions:
\begin{enumerate}
\item[\textbf{m1}] All vertices of $\Pi$ have degree 4. 
\item[\textbf{m3}] $\Pi$ has no ``rectangular prismatic elements,'' i.e. quadruples of faces $F_1$, $F_2$, $F_3$, $F_4$ such that the pairs $(F_1, F_2)$, $(F_2, F_3)$, $(F_3, F_4)$, and $(F_4, F_1)$ each share a common edge, but the pairs $(F_1,F_3)$, $(F_2, F_4)$ do not share a common edge or vertex. 
\item[\textbf{m5}]  $\Pi$ has no triples of faces $F_1$, $F_2$, $F_3$ such that the pairs $(F_1, F_2)$ and $(F_2, F_3)$ each share a common edge, and $(F_1, F_3)$ share a common vertex which is not on $F_2$. 
\end{enumerate}
\end{theorem}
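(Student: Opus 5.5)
This statement is Andreev's theorem, quoted from \cite{Andreev}. The plan is to derive it from the general form of Andreev's theorem for ideal polyhedra, or equivalently from the Rivin/Hodgson--Rivin characterization of ideal polyhedra. That characterization says: an abstract polyhedron $\Pi$ with prescribed exterior dihedral angles $\theta(e)\in(0,\pi)$ is realizable as an ideal hyperbolic polyhedron if and only if two conditions hold. First, the exterior angles around each vertex sum to exactly $2\pi$. Second, for every simple closed curve on the dual graph $\Pi^*$ that bounds a disk and does not encircle a single vertex, the sum of the exterior angles it crosses is strictly greater than $2\pi$. In the right-angled case every $\theta(e)=\tfrac{\pi}{2}$, so both conditions become purely combinatorial counts of edges, and the proof reduces to translating them into \textbf{m1}, \textbf{m3}, \textbf{m5}.

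The vertex condition comes first. With all exterior angles $\tfrac{\pi}{2}$, the angles at a vertex of degree $d$ sum to $2\pi$ exactly when $d\cdot\tfrac{\pi}{2}=2\pi$, that is, when $d=4$. This is precisely \textbf{m1}, and it gives both directions immediately.

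Next, assuming \textbf{m1}, I would analyze the curve condition. A closed curve in $\Pi^*$ crossing $k$ edges satisfies the strict inequality exactly when $k\cdot\tfrac{\pi}{2}>2\pi$, that is, when $k\ge 5$. So the condition fails exactly when there is a non-vertex-linking simple closed dual curve of length $k\le 4$.
\begin{enumerate}
\item[(i)] Lengths $k\le 2$ cannot occur, because $\Pi$ is a polyhedron and so is $3$-connected.
\item[(ii)] A $3$-cycle crosses edges between faces $F_1,F_2,F_3$ that are pairwise adjacent. In a $4$-valent polyhedron, a triangle of faces either surrounds a vertex (when all three faces meet there), which is excluded, or it is a prismatic $3$-circuit. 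I would show that a prismatic $3$-circuit forces a configuration violating \textbf{m5}, or else contradicts $4$-valence together with an Euler-characteristic count on the disk it bounds. The counting is as follows: inside the disk, $4$-valence gives $2E=4V$, which together with Euler's formula on the disk constrains the boundary length to be even.
\item[(iii)] A $4$-cycle $F_1F_2F_3F_4$ in which $F_1,F_3$ and $F_2,F_4$ share no edge or vertex is exactly a rectangular prismatic element, ruled out by \textbf{m3}. If instead $F_1$ and $F_3$ share a vertex $v$, there are two cases. If $v$ also lies on $F_2$ or $F_4$, then the curve can be isotoped to encircle vertices and is not genuinely prismatic. If $v$ lies on neither, this is the forbidden triple of \textbf{m5}.
\end{enumerate}
Conversely, a violation of \textbf{m3} or \textbf{m5} produces a dual $4$-curve, separating vertices on both sides, with angle sum $2\pi$, which is not $>2\pi$. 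So realizability fails, which gives necessity.

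The main obstacle is step (iii), together with the vertex-sharing subtleties in (ii). One must check carefully that a $4$-cycle in $\Pi^*$ either bounds a region containing exactly one vertex (which is allowed), or is detected by \textbf{m3} or \textbf{m5}. My first attempt would be a local case analysis around the shared vertex: using $4$-valence, the four faces around a vertex are cyclically ordered, and I would determine which ordering produces the \textbf{m5} configuration. The parity argument of (ii) should also be handled uniformly. In the $4$-valent setting, with every angle equal to $\tfrac{\pi}{2}$, the Gauss--Bonnet/Euler count on each side of the curve should force every dual cycle to have even length, which disposes of all odd $k$ at once.
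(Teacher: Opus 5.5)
The paper does not prove this statement; it simply quotes it from Andreev. Your route, deducing it from Rivin's characterization of ideal polyhedra, is genuinely different and workable. Note that Rivin's theorem is strictly more general than Andreev's, since it allows obtuse angles, so the two are not equivalent. With all exterior angles $\tfrac{\pi}{2}$, the vertex condition is exactly \textbf{m1}. The only circuits of $\Pi^*$ that can violate the strict inequality are non-facial ones of length $3$ or $4$. What this buys you is a short combinatorial reduction, with uniqueness thrown in, at the cost of citing a later and deeper theorem.

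Your parity claim is correct, and it is cleanest stated as follows: every face of $\Pi^*$ is a quadrilateral, so $\Pi^*$ is bipartite. This is exactly the face $2$-coloring used in Section 2. It disposes of all odd circuits at once and shows that the ``surrounds a vertex'' alternative in (ii) is vacuous. It also supplies a fact you use silently in (iii): $F_1,F_3$ (and $F_2,F_4$) never share an edge.

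The real problem is the case split in (iii). Suppose the common vertex $v$ of $F_1,F_3$ lies on $F_2$ but not on $F_4$. Then the circuit is \emph{not} facial. A vertex $w$ lying on all four faces would differ from $v$, and then $F_1\cap F_3\supseteq\{v,w\}$ would force $F_1,F_3$ to share an edge. So this is a genuinely non-facial $4$-circuit with angle sum exactly $2\pi$, and your claim that it ``can be isotoped to encircle vertices'' is false. It is excluded only because $(F_1,F_4,F_3)$ is then an \textbf{m5} triple.

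The correct argument applies \textbf{m5} to both triples $(F_1,F_2,F_3)$ and $(F_1,F_4,F_3)$ to get $v\in F_2\cap F_4$. By \textbf{m1}, the $F_i$ are then the four faces at $v$. Two faces of a polyhedron meet in at most one edge, so each $F_i\cap F_{i+1}$ is an edge incident to $v$. Hence the circuit is the boundary of the face $v^*$ of $\Pi^*$.

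Likewise, for the necessity of \textbf{m5} you should exhibit the $4$-circuit explicitly. By bipartiteness, $F_1$ and $F_3$ are opposite at $v$, so some face $Y$ at $v$ shares edges with both. Then $F_1F_2F_3Y$ is a simple circuit. It is non-facial because $F_1\cap F_3=\{v\}$ and $v\notin F_2$.
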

\noindent Andreev also lists conditions \textbf{m0}, \textbf{m2}, \textbf{m4}, which are vacuous for right-angled, ideal polyhedra. 

We prove a different equivalent condition for $\Pi$ to be realizable as a right-angled, ideal hyperbolic polyhedron. The \emph{rectification} of a polyhedron $\Pi_0$ is another polyhedron $\Pi$ with a vertex for each edge of $\Pi_0$ and a face for each vertex and face of $\Pi_0$. It is constructed by truncating every vertex of $\Pi_0$ with a cut through the midpoints of the neighboring edges. For example, the rectification of a tetrahedron is an octahedron, and the rectification of a cube or octahedron is a cuboctahedron. See Figure \ref{Figure1}.

\begin{figure}[h]
\vspace{-.5cm}
\begin{tikzpicture}
\node[circle,fill=blue,inner sep=1pt] (B1) at (2.1179,2.58502) {};

\node[circle,fill=blue,inner sep=1pt] (B2) at (3.02296,1.84241) {};

\node[circle,fill=blue,inner sep=1pt] (B3) at (2.43233,3.682) {};

\node[circle,fill=blue,inner sep=1pt] (B4) at (4.24698,2.08529) {};

\node[circle,fill=blue,inner sep=1pt] (B5) at (4.97298,3.12332) {};

\node[circle,fill=blue,inner sep=1pt] (B6) at (3.71747,4.26612) {};

\node[circle,fill=blue,inner sep=1pt] (B7) at (1.71656,5.03713) {};

\node[circle,fill=blue,inner sep=1pt] (B8) at (2.82288,5.37015) {};

\node[circle,fill=blue,inner sep=1pt] (B9) at (5.60253,3.97203) {};

\node (B10) at (3.24419,6.36852) {};

\node (B11) at (4.91456,6.17912) {};

\node[circle,fill=blue,inner sep=1pt] (B12) at (5.07174,4.90536) {};

\node (B13) at (1.14977,1.5817) {};

\node (B14) at (0.6616,2.94717) {};

\node (B15) at (2.43777,0.890967) {};

\node (B16) at (0.840896,3.73223) {};

\node (B17) at (4.80093,1.20551) {};

\node (B18) at (6.16137,1.88154) {};

\node (B19) at (0.526339,4.45976) {};

\node (B20) at (0.77259,6.4505) {};

\node (B25) at (6.69942,5.09354) {};

\node (B26) at (6.66384,3.73875) {};

\draw[blue] (B1) -- (B2);

\draw[blue] (B2) -- (B4);

\draw[blue] (B1) -- (B3);

\draw[blue] (B4) -- (B5);

\draw[blue] (B5) -- (B6);

\draw[blue] (B3) -- (B6);

\draw[blue] (B5) -- (B9);

\draw[blue] (B9) -- (B12);

\draw[blue] (B6) -- (B12);

\draw[blue] (B3) -- (B7);

\draw[blue] (B7) -- (B8);

\draw[blue] (B6) -- (B8);

\draw[blue] (B5) -- (B9);

\draw[blue] (B9) -- (B12);

\draw[blue] (B8) -- (B10);

\draw[blue] (B11) -- (B12);

\draw[blue] (B1) -- (B13);

\draw[blue] (B1) -- (B14);

\draw[blue] (B2) -- (B15);

\draw[blue] (B3) -- (B16);

\draw[blue] (B4) -- (B17);

\draw[blue] (B5) -- (B18);

\draw[blue] (B7) -- (B19);

\draw[blue] (B7) -- (B20);

\draw[blue] (B12) -- (B25);

\draw[blue] (B9) -- (B26);

\node[circle,fill=black,inner sep=1pt] (C1) at (1.63384,2.08336) {};

\node[circle,fill=black,inner sep=1pt] (C2) at (2.73037,1.36669) {};

\node[circle,fill=black,inner sep=1pt] (C3) at (1.38975,2.7661) {};

\node[circle,fill=black,inner sep=1pt] (C4) at (2.57043,2.21372) {};

\node[circle,fill=black,inner sep=1pt] (C5) at (1.63661,3.70711) {};

\node[circle,fill=black,inner sep=1pt] (C6) at (2.27511,3.13351) {};

\node[circle,fill=black,inner sep=1pt] (C7) at (3.63497,1.96385) {};

\node[circle,fill=black,inner sep=1pt] (C8) at (2.07444,4.35956) {};

\node[circle,fill=black,inner sep=1pt] (C9) at (4.52395,1.6454) {};

\node[circle,fill=black,inner sep=1pt] (C10) at (1.12145,4.74845) {};

\node[circle,fill=black,inner sep=1pt] (C11) at (3.0749,3.97406) {};

\node[circle,fill=black,inner sep=1pt] (C12) at (4.60998,2.6043) {};

\node[circle,fill=black,inner sep=1pt] (C13) at (5.56717,2.50243) {};

\node[circle,fill=black,inner sep=1pt] (C14) at (4.34522,3.69472) {};

\node[circle,fill=black,inner sep=1pt] (C15) at (3.27017,4.81813) {};

\node[circle,fill=black,inner sep=1pt] (C16) at (2.26972,5.20364) {};

\node[circle,fill=black,inner sep=1pt] (C17) at (5.28776,3.54768) {};

\node[circle,fill=black,inner sep=1pt] (C18) at (1.24457,5.74382) {};

\node[circle,fill=black,inner sep=1pt] (C19) at (5.33713,4.43869) {};

\node[circle,fill=black,inner sep=1pt] (C20) at (3.03353,5.86933) {};

\node[circle,fill=black,inner sep=1pt] (C22) at (4.3946,4.58574) {};

\node[circle,fill=black,inner sep=1pt] (C23) at (6.13319,3.85539) {};

\node[circle,fill=black,inner sep=1pt] (C25) at (5.88558,4.99945) {};

\node[circle,fill=black,inner sep=1pt] (C27) at (4.99315,5.54224) {};

\draw[dashed] (C1) -- (C3);

\draw[dashed] (C1) -- (C4);

\draw[dashed] (C2) -- (C4);

\draw[dashed] (C2) -- (C7);

\draw[dashed] (C3) -- (C6);

\draw[dashed] (C5) -- (C6);

\draw[dashed] (C5) -- (C8);

\draw[dashed] (C6) -- (C4);

\draw[dashed] (C6) -- (C11);

\draw[dashed] (C7) -- (C4);

\draw[dashed] (C7) -- (C9);

\draw[dashed] (C7) -- (C12);

\draw[dashed] (C8) -- (C11);

\draw[dashed] (C8) -- (C16);

\draw[dashed] (C9) -- (C12);

\draw[dashed] (C10) -- (C8);

\draw[dashed] (C10) -- (C18);

\draw[dashed] (C11) -- (C14);

\draw[dashed] (C11) -- (C15);

\draw[dashed] (C12) -- (C13);

\draw[dashed] (C12) -- (C14);

\draw[dashed] (C13) -- (C17);

\draw[dashed] (C14) -- (C17);

\draw[dashed] (C14) -- (C22);

\draw[dashed] (C15) -- (C20);

\draw[dashed] (C15) -- (C22);

\draw[dashed] (C15) -- (C16);

\draw[dashed] (C16) -- (C20);

\draw[dashed] (C17) -- (C23);

\draw[dashed] (C16) -- (C18);

\draw[dashed] (C17) -- (C19);

\draw[dashed] (C19) -- (C22);

\draw[dashed] (C19) -- (C23);

\draw[dashed] (C19) -- (C25);

\draw[dashed] (C22) -- (C27);

\draw[dashed] (C25) -- (C27);

\end{tikzpicture}
\vspace{-.5cm}
\caption{Planar graph $\Pi_0$ (blue) and rectification $\Pi$ (black, dashed).}
\label{Figure1}
\end{figure}

Our main result is the following.
\begin{theorem} \label{Theorem2}
An abstract polyhedron $\Pi$ can be realized as a right-angled, ideal hyperbolic polyhedron if and only if it is the rectification of some other polyhedron $\Pi_0$. 
\end{theorem}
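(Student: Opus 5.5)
We reduce Theorem \ref{Theorem2} to Theorem \ref{Theorem1}. We show that an abstract polyhedron $\Pi$ satisfies \textbf{m1}, \textbf{m3}, \textbf{m5} if and only if $\Pi$ is the rectification of some polyhedron $\Pi_0$. The bridge is the standard correspondence between 4-valent planar graphs and medial graphs. If every vertex of $\Pi$ has degree 4, then the faces of $\Pi$ can be properly 2-colored, since the dual graph of an Eulerian plane graph is bipartite. Call the two classes ``white'' and ``black.'' We then define $\Pi_0$ to have a vertex for each black face, a face for each white face, and an edge for each vertex $v$ of $\Pi$. At $v$ the four faces alternate black, white, black, white, and the edge for $v$ joins the two black faces at $v$. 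This $\Pi_0$ is embedded in the sphere, and its rectification is $\Pi$ by construction. Conversely, the rectification of any polyhedron is visibly 4-valent, with faces colored by ``vertex of $\Pi_0$'' and ``face of $\Pi_0$.''

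For the direction ``rectification $\Rightarrow$ realizable,'' let $\Pi$ be the rectification of a polyhedron $\Pi_0$, i.e.\ a 3-connected simple planar graph. Condition \textbf{m1} is immediate. For \textbf{m5}, suppose $F_1, F_2$ share an edge and $F_2, F_3$ share an edge. Since adjacent faces have opposite colors, $F_1$ and $F_3$ have the same color. Two faces of the same color meeting at a vertex $v$ are exactly the two black faces or the two white faces at $v$. Translated into $\Pi_0$, this means $F_1$ and $F_3$ are the two endpoints, or the two sides, of an edge $e$. Then $F_2$ must be a face (respectively a vertex) of $\Pi_0$ incident to both. If $F_2$ is not on $v$, this gives two vertices of $\Pi_0$ joined by an edge and also lying on a common face other than the two faces at $e$, or two faces sharing an edge and also sharing a further vertex. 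Either configuration contradicts 3-connectivity of $\Pi_0$; for instance, the two shared elements would form a separating pair. For \textbf{m3}, a prismatic 4-circuit $F_1F_2F_3F_4$ alternates colors, so it corresponds in $\Pi_0$ to vertices $a, c$ and faces $b, d$ with $a, c$ both lying on both $b$ and $d$. The non-adjacency conditions say that $a \neq c$ are not joined by an edge lying on $b$ and $d$. Then $\{a, c\}$ separates $\Pi_0$ (a Jordan curve through $a, b, c, d$ separates the remaining parts of $b$ and $d$), again contradicting 3-connectivity.

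For the converse, given $\Pi$ satisfying \textbf{m1}, \textbf{m3}, \textbf{m5}, we build $\Pi_0$ from the 2-coloring above and must check that $\Pi_0$ is a genuine polyhedron, that is, simple and 3-connected. We run the same dictionary backwards. A loop or a multiple edge in $\Pi_0$, or a 2-vertex cut, yields two vertices $a, c$ and two faces $b, d$ of $\Pi_0$ forming a cycle of incidences. This gives either a prismatic 4-circuit in $\Pi$, forbidden by \textbf{m3}, or a degenerate version where $a$ and $c$ are adjacent, which produces an \textbf{m5} triple. We also need the choice of coloring to be immaterial. Swapping colors gives the dual $\Pi_0^*$, which has the same rectification, and that is consistent. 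The main obstacle is this 3-connectivity verification. Specifically, we must show that every 2-separation of $\Pi_0$ yields such a vertex–face–vertex–face cycle. I would handle this with the standard fact that a 2-connected plane graph with a separating pair $\{a, c\}$ has two faces each containing both $a$ and $c$. Degenerate cases, such as very small $\Pi_0$ or $\Pi$ with a digon, would be checked directly against \textbf{m5}.
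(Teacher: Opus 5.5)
You follow the paper in reducing to Andreev's theorem plus Steinitz, with $\Pi_0$ read off from a 2-coloring of the faces of $\Pi$. Your easy direction also matches the paper: a violation of \textbf{m3} or \textbf{m5} in a rectification would give a separating pair in $\Pi_0$. The hard direction takes a different route. The paper proves 3-connectivity of $\Pi_0$ directly: it takes a path in the 3-connected dual of $\Pi$ that avoids two blue faces, and reroutes it around each red face, using \textbf{m3} and \textbf{m5} for the one bad configuration. You argue by contraposition, turning a 2-cut $\{a,c\}$ of $\Pi_0$ into faces $b,d$ and hence into a 4-cycle $a,b,c,d$ of faces of $\Pi$. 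That route works, but the crux step as you state it does not close, for three reasons.

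First, the fact you quote is too weak when $a$ and $c$ are joined by an edge $e$. The two faces flanking $e$ already contain both $a$ and $c$, and for that choice of $b,d$ there is no \textbf{m5} triple. So ``$a,c$ adjacent produces an \textbf{m5} triple'' does not follow. You need the refined standard form: $b\neq d$ can be chosen so that the closed curve $a\to b\to c\to d\to a$ has vertices of $\Pi_0$ on both sides. Then $e$ cannot lie on both $b$ and $d$. If it did, the curve could be isotoped inside $b\cup d$ to a thin loop around $e$, leaving no vertex on one side. Hence $(a,b,c)$ or $(a,d,c)$ violates \textbf{m5}.

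Second, your dichotomy misses a case. If $a,c$ are not adjacent, \textbf{m3} does not give a contradiction outright; it only forces $b$ and $d$ to share a vertex of $\Pi$, i.e.\ an edge $xy$ of $\Pi_0$. Since $\{x,y\}\neq\{a,c\}$, one of $a,c$, say $a$, is not an endpoint of $xy$. Then $(b,a,d)$ is an \textbf{m5} triple.

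Third, the lemma presupposes that $\Pi_0$ is 2-connected, which you never check. A cut vertex $a$ occurs twice on some facial walk $b$ of $\Pi_0$, so the faces $a$ and $b$ of $\Pi$ would share two edges. That is impossible for a polyhedral $\Pi$, whose faces are simple cycles meeting pairwise in at most a vertex or an edge. The same fact rules out loops and multiple edges in $\Pi_0$ directly, without your dictionary.

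With these repairs your argument is complete and arguably cleaner than the paper's. Both directions become a single dictionary between separating vertex--face--vertex--face curves in $\Pi_0$ and 4-cycles of faces in $\Pi$. By contrast, the paper leaves its final rerouting claim (that the modified path avoids $B_2$) largely to the reader. The cost is reliance on the structural lemma about 2-cuts in plane graphs, which the paper's Menger-style argument avoids by using only the 3-connectivity of the dual of $\Pi$.
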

This is more or less a corollary of Theorem \ref{Theorem1}, but is of interest because it gives a more geometric characterization of $\Pi$.  We also give a short proof of the celebrated Koebe-Andreev-Thurston Theorem, which is closely related to the ``if'' part of Theorem \ref{Theorem2}. Thurston derives this result from Andreev's Theorem \cite[Corollary 13.6.2]{Thurston}. Our result clarifies the relationship between these theorems. 

\section{Proof of Theorem \ref{Theorem2}}

Any polyhedral graph $\Pi_0$ admits a rectification $\Pi$ which is 4-valent. Conversely, given a 4-valent polyhedral graph $\Pi$, its faces can be 2-colored, say red and blue. We can form a new planar graph $\Pi_0$ with a vertex for each blue face, and an edge wherever two blue faces share a vertex. Then $\Pi$ is the rectification of $\Pi_0$. By Steinitz's theorem, $\Pi_0$ is the graph of a polyhedron if and only if it is 3-connected. We will show that $\Pi$ satisfies conditions \textbf{m3} and \textbf{m5} if and only if $\Pi_0$ is 3-connected. 

First, suppose that $\Pi$ is a polyhedral graph satisfying conditions \textbf{m1}, \textbf{m3}, \textbf{m5}. In order to establish that the blue face graph $\Pi_0$ is 3-connected, suppose that two blue faces $B_1$, $B_2$ are deleted. Since the dual of $\Pi$ is 3-connected, for any two blue faces $B$, $B'$ not equal to $B_1$, $B_2$, we can find a path from $B$ to $B'$ which avoids $B_1$ and $B_2$. The path consists of faces $B=F_1, F_2, \ldots F_r=B'$ such that each $F_i$ is adjacent to $F_{i+1}$ and no $F_i=B_1$ or $B_2$. Because the faces are two-colored, this path must alternate between blue and red faces. 

We now transform this path to a path in $\Pi_0$ which still avoids $B_1$ and $B_2$. If $F_i$ is a red face in the path, then it is surrounded by adjacent blue faces, including $F_{i-1}$ and $F_{i+1}$ We can form a path from $F_{i-1}$ to $F_{i+1}$ in $\Pi_0$ by following the chain of consecutive adjacent blue faces between them. See Figure \ref{Figure2}.

\begin{figure}[h]
\begin{tikzpicture}
\coordinate (C1) at (1.63384,2.08336) {};

\coordinate (C2) at (2.73037,1.36669) {};

\coordinate (C3) at (1.38975,2.7661) {};

\coordinate (C4) at (2.57043,2.21372) {};

\coordinate (C5) at (1.63661,3.70711) {};

\coordinate (C6) at (2.27511,3.13351) {};

\coordinate (C7) at (3.63497,1.96385) {};

\coordinate (C8) at (2.07444,4.35956) {};

\coordinate (C9) at (4.52395,1.6454) {};

\coordinate (C10) at (1.12145,4.74845) {};

\coordinate (C11) at (3.0749,3.97406) {};

\coordinate (C12) at (4.60998,2.6043) {};

\coordinate (C13) at (5.56717,2.50243) {};

\coordinate (C14) at (4.34522,3.69472) {};

\coordinate (C15) at (3.27017,4.81813) {};

\coordinate (C16) at (2.26972,5.20364) {};

\coordinate (C17) at (5.28776,3.54768) {};

\coordinate (C18) at (1.24457,5.74382) {};

\coordinate (C19) at (5.33713,4.43869) {};

\coordinate (C20) at (3.03353,5.86933) {};

\coordinate (C21) at (2.31589,6.69906) {};

\coordinate (C22) at (4.3946,4.58574) {};

\coordinate (C23) at (6.13319,3.85539) {};

\coordinate (C24) at (4.07938,6.27382) {};

\coordinate (C25) at (5.88558,4.99945) {};

\coordinate (C26) at (3.22204,7.00798) {};

\coordinate (C27) at (4.99315,5.54224) {};

\coordinate (C28) at (5.62024,6.2949) {};

\coordinate (C29) at (4.90994,6.87269) {};

\draw[fill=blue!10] (C1) -- (C3) -- (C6) -- (C4) -- cycle;

\draw[fill=blue!10] (C2) -- (C4) -- (C7) -- cycle;

\draw[fill=blue!10] (C5) -- (C6) -- (C11) -- (C8) -- cycle;

\draw[fill=blue!10] (C7) -- (C9) -- (C12) -- cycle;

\draw[fill=blue!10] (C12) -- (C13) -- (C17) -- (C14) -- cycle;

\draw[fill=blue!10] (C11) -- (C14) -- (C22) -- (C15) -- cycle;

\draw[fill=red!10] (C4) -- (C6) -- (C11) -- (C14) -- (C12) -- (C7) -- cycle;

\draw[fill=red!10] (C8) -- (C11) -- (C15) -- (C16) -- cycle;

\draw[fill=red!10] (C9) -- (C12) -- (C13) -- cycle;

\draw[fill=red!10] (C14) -- (C17) -- (C19) -- (C22) -- cycle;

\draw[fill=red!10] (C1) -- (1.69377, 1.23634) -- (C2) -- (C4) --  cycle;

\draw[fill=red!10] (C3) -- (C5) -- (C6) -- cycle;

\draw[fill=red!10] (C2) -- (C7) -- (C9) --(4.51935, 1.04824) -- (3.01935, 1.04824) -- cycle;

\node[blue] (B1) at (2.0179,2.48502) {$F_{i-1}$};

\node (B3) at (2.43233,3.682) {};

\node (B6) at (3.71747,4.26612) {};

\node[red!75!black] (R) at (3.44607, 2.96105) {$F_i$};

\node[blue] (B5) at (4.97298,3.12332) {$F_{i+1}$};

\draw[blue, ->] (B1) -- (B3);

\draw[blue, ->] (B3) -- (B6);

\draw[blue, ->] (B6) -- (B5);
\end{tikzpicture}
\caption{Following a chain of adjacent blue faces around a red face.}
\label{Figure2}
\end{figure}

Since we can go clockwise or counterclockwise, this new path is possible unless $B_1$ and $B_2$ are both adjacent to $F_i$, one between $F_{i-1}$ and $F_{i+1}$ in the clockwise direction, and one in the counterclockwise direction. This means there is no vertex shared by $B_1$, $B_2$, and $F_i$. If this occurs, then condition \textbf{m5} implies that $B_1$, $B_2$ do not share a vertex, and condition \textbf{m3} implies that there is no red face besides $F_i$ which is adjacent to both $B_1$ and $B_2$. 

In this case, we further transform the path as follows. First we form the path from $F_{i-1}$ to $F_{i+1}$ along the chain of blue faces adjacent to $F_i$, passing through $B_1$. Then, to avoid $B_1$, we follow the chain of red faces other than $F_i$ around it. Finally, to avoid these red faces, we follow the chain of all blue faces other than $B_1$ around each one. See Figure \ref{Figure3}. Conditions \textbf{m3} and \textbf{m5} imply that this modified path does not go through $B_2$. We have shown that $\Pi_0$ is 3-connected. 

\begin{figure}[h]
\begin{tikzpicture}
\coordinate (C1) at (1.63384,2.08336) {};

\coordinate (C2) at (2.73037,1.36669) {};

\coordinate (C3) at (1.38975,2.7661) {};

\coordinate (C4) at (2.57043,2.21372) {};

\coordinate (C5) at (1.63661,3.70711) {};

\coordinate (C6) at (2.27511,3.13351) {};

\coordinate (C7) at (3.63497,1.96385) {};

\coordinate (C8) at (2.07444,4.35956) {};

\coordinate (C9) at (4.52395,1.6454) {};

\coordinate (C10) at (1.12145,4.74845) {};

\coordinate (C11) at (3.0749,3.97406) {};

\coordinate (C12) at (4.60998,2.6043) {};

\coordinate (C13) at (5.56717,2.50243) {};

\coordinate (C14) at (4.34522,3.69472) {};

\coordinate (C15) at (3.27017,4.81813) {};

\coordinate (C16) at (2.26972,5.20364) {};

\coordinate (C17) at (5.28776,3.54768) {};

\coordinate (C18) at (1.24457,5.74382) {};

\coordinate (C19) at (5.33713,4.43869) {};

\coordinate (C20) at (3.03353,5.86933) {};

\coordinate (C21) at (2.31589,6.69906) {};

\coordinate (C22) at (4.3946,4.58574) {};

\coordinate (C23) at (6.13319,3.85539) {};

\coordinate (C24) at (4.07938,6.27382) {};

\coordinate (C25) at (5.88558,4.99945) {};

\coordinate (C26) at (3.22204,7.00798) {};

\coordinate (C27) at (4.99315,5.54224) {};

\coordinate (C28) at (5.62024,6.2949) {};

\coordinate (C29) at (4.90994,6.87269) {};

\draw[fill=blue!10] (C1) -- (C3) -- (C6) -- (C4) -- cycle;

\draw[fill=blue!10] (C2) -- (C4) -- (C7) -- cycle;

\draw[fill=blue!10] (C5) -- (C6) -- (C11) -- (C8) -- cycle;

\draw (C7) -- (C9) -- (C12) -- cycle;

\draw[fill=blue!10] (C12) -- (C13) -- (C17) -- (C14) -- cycle;

\draw (C11) -- (C14) -- (C22) -- (C15) -- cycle;

\draw[fill=blue!10] (C8) -- (C10) -- (C18) -- (C16) -- cycle;

\draw[fill=blue!10] (C15) -- (C16) -- (C20) -- cycle;

\draw[fill=blue!10] (C17) -- (C19) -- (C23) -- cycle;

\draw[fill=blue!10] (C20) -- (C21) -- (C26) -- (C24) -- cycle;

\draw[fill=blue!10] (C24) -- (C27) -- (C28) -- (C29) -- cycle;

\draw[fill=blue!10] (C19) -- (C22) -- (C27) -- (C25) -- cycle;

\draw[fill=red!10] (C4) -- (C6) -- (C11) -- (C14) -- (C12) -- (C7) -- cycle;

\draw[fill=red!10] (C8) -- (C11) -- (C15) -- (C16) -- cycle;

\draw[fill=red!10] (C9) -- (C12) -- (C13) -- cycle;

\draw[fill=red!10] (C14) -- (C17) -- (C19) -- (C22) -- cycle;

\draw[fill=red!10] (C1) -- (1.69377, 1.23634) -- (C2) -- (C4) --  cycle;

\draw[fill=red!10] (C3) -- (C5) -- (C6) -- cycle;

\draw[fill=red!10] (C2) -- (C7) -- (C9) --(4.51935, 1.04824) -- (3.01935, 1.04824) -- cycle;

\draw[fill=red!10] (C15) -- (C20) -- (C24) -- (C27) -- (C22) -- cycle;

\draw[fill=red!10] (C16) -- (C18) -- (C21) -- (C20) -- cycle;

\draw[fill=red!10] (C5) -- (C8) --  (C10) -- (0.76134, 4.09793) -- cycle;

\draw[fill=red!10] (C26) -- (C24) -- (C29) --  (4.48688, 7.29651) --(3.78688, 7.29651) -- cycle;

\draw[fill=red!10] (C28) -- (C27) -- (C25)  -- (6.54918, 5.94587) -- cycle;

\draw[fill=red!10] (C25) -- (C19) -- (C23)  -- cycle;

\draw[fill=red!10] (C13) -- (C17) -- (C23) -- (6.51295, 3.72211) -- (6.11295, 2.22211) -- cycle;

\node[blue] (B1) at (2.0179,2.48502) {$F_{i-1}$};

\node (B2) at (3.02296,1.84241) {};

\node (B3) at (2.43233,3.682) {};

\node[blue] (B4) at (4.24698,2.03529) {$B_2$};

\node[blue] (B5) at (4.97298,3.12332) {$F_{i+1}$};

\node[blue] (B6) at (3.71747,4.26612) {$B_1$};

\node (B7) at (1.71656,5.03713) {};

\node (B8) at (2.82288,5.37015) {};

\node (B9) at (5.60253,3.97203) {};

\node (B10) at (3.24419,6.36852) {};

\node (B11) at (4.91456,6.17912) {};

\node (B12) at (5.07174,4.90536) {};

\node (B3) at (2.43233,3.682) {};

\node (B6) at (3.71747,4.26612) {};

\node[red!75!black] (R) at (3.44607, 2.96105) {$F_i$};

\draw[blue, ->] (B1) -- (B3);
\draw[blue, ->] (B3) -- (B7);
\draw[blue, ->] (B7) -- (B8);
\draw[blue, ->] (B8) -- (B10);
\draw[blue, ->] (B10) -- (B11);
\draw[blue, ->] (B11) -- (B12);
\draw[blue, ->] (B12) -- (B9);
\draw[blue, ->] (B9) -- (B5);
\end{tikzpicture}
\caption{Avoiding two blue faces adjacent to a red face.}
\label{Figure3}
\end{figure}

For the opposite implication, we assume $\Pi$ is a 4-valent polyhedral graph not satisfying all of Andreev's conditions, and show that $\Pi_0$ is not 3-connected. Suppose $\Pi$ does not satisfy \textbf{m5}. For the triple of faces $F_1$, $F_2$, $F_3$ violating \textbf{m5}, suppose $F_1$ and $F_3$ are blue while $F_2$ is red. Draw a simple closed curve $\gamma$ starting in $F_1$, crossing the edge into $F_2$, crossing the edge into $F_3$, and crossing the vertex back into $F_1$. Since there are faces adjacent to $F_1$, $F_2$, and $F_3$ on both sides of $\gamma$, there are both blue and red faces on both sides of $\gamma$. If we choose two blue faces on opposite sides of $\gamma$, any path between them in $\Pi_0$ must cross $\gamma$, and must therefore include $F_1$ or $F_3$. If we remove $F_1$ and $F_3$, we disconnect $\Pi_0$. Thus $\Pi_0$ is not 3-connected. If $F_1$ and $F_3$ are red while $F_2$ is blue, we find that the dual of $\Pi_0$ is not 3-connected, which also implies that $\Pi_0$ is not 3-connected. The proof when $\Pi$ does not satisfy \textbf{m3} is similar.

\section{The Koebe-Andreev-Thurston Theorem} 

There are many equivalent statements of the Koebe-Andreev-Thurston theorem, and many proofs in the literature. See \cite{Stephenson} for an overview. We will sketch a proof of the following version.

\begin{theorem}[Koebe-Andreev-Thurston] \label{Theorem3}
Any abstract polyhedron $\Pi_0$ admits a realization in $\R^3$ with a midsphere, i.e. a sphere tangent to every edge.
\end{theorem}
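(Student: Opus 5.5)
We would derive Theorem \ref{Theorem3} from the ``if'' direction of Theorem \ref{Theorem2}. Given an abstract polyhedron $\Pi_0$, let $\Pi$ be its rectification. By Theorem \ref{Theorem2}, $\Pi$ is realized as a right-angled, ideal polyhedron $P\subset\mathbb{H}^3$. We work in the Poincar\'e ball model, so the ideal vertices of $P$ lie on the sphere at infinity $S^2$. Each face of $P$ lies in a hyperbolic plane, which meets $S^2$ in a circle. The faces of $\Pi$ come in two colors: blue faces correspond to vertices of $\Pi_0$, and red faces correspond to faces of $\Pi_0$. This gives a family of blue circles $C_v$ and red circles $D_f$ on $S^2$.

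The first step is to read off the incidence geometry. At each ideal vertex of $P$, which corresponds to an edge $e=uv$ of $\Pi_0$, exactly four faces meet: two blue faces $C_u$, $C_v$ and two red faces $D_f$, $D_g$, where $f,g$ are the faces of $\Pi_0$ containing $e$. Adjacent faces of $P$ meet at dihedral angle $\pi/2$, so their boundary circles meet orthogonally. Faces of $P$ sharing only an ideal vertex have planes that are tangent at infinity, so their circles are tangent at that point. Hence $C_u$ and $C_v$ are tangent at a point $p_e$, $D_f$ and $D_g$ are tangent at $p_e$, and each $C$ crosses each $D$ orthogonally through $p_e$. We also need that blue circles of non-adjacent vertices do not meet. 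This holds because the corresponding planes are disjoint, since $P$ is convex and its faces meet only along edges and ideal vertices.

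The second step is to lift to $\R^3$, viewing $S^2$ as the unit sphere. Each circle $C_v$ bounds a spherical cap, which we choose on the side away from $P$. The cone over this cap tangent to $S^2$ has an apex $x_v$ outside the ball, namely the pole of the plane of $C_v$. For each red face $f$, we take the plane $H_f$ in $\R^3$ containing $D_f$. We then check three things:
\begin{enumerate}
\item[(a)] $x_v\in H_f$ whenever $v\in f$. A circle orthogonal to $C_v$ has its plane passing through the pole of $C_v$, and $D_f$ is orthogonal to $C_v$ exactly when $v\in f$.
\item[(b)] For an edge $e=uv$, the segment $x_ux_v$ is tangent to $S^2$ at $p_e$. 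Both $x_u$ and $x_v$ lie on the tangent line at $p_e$ that is common to the tangent circles $C_u$ and $C_v$, and they lie on opposite sides of $p_e$ because the caps are disjoint.
\item[(c)] The polytope $Q=\bigcap_f\{$half-space of $H_f$ containing the origin$\}$ has combinatorics $\Pi_0$, with vertices $x_v$ and edges $x_ux_v$.
\end{enumerate}
By (b), every edge of $Q$ is tangent to the unit sphere, which is the required midsphere.

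The main obstacle is (c). We must show that $Q$ is a genuine convex polytope whose face lattice matches $\Pi_0$, not merely a configuration with the right incidences. Our first approach is to use the fact that the red circles $D_f$ bound caps whose interiors are disjoint and whose closures cover $S^2$ minus the blue caps. Then each $H_f$ is a supporting plane cutting a cap of the unit ball, and the face of $Q$ in $H_f$ is the polygon with vertices $x_v$ for $v\in f$, circumscribed about $D_f$. Convexity at each $x_v$ would follow from the cyclic order of the $D_f$ around $C_v$, which matches the cyclic order of the faces around $v$ in $\Pi_0$. A possible degeneracy occurs when some $x_v$ is at infinity, that is, when $C_v$ is a great circle. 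We would remove this by first applying a hyperbolic isometry, a M\"obius transformation of the ball, that moves $P$ so that the origin lies inside $P$. Then every blue cap lies strictly within a hemisphere and every apex $x_v$ is finite.
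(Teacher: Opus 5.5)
Your argument is the paper's: the apex $x_v$ of the tangent cone over $C_v$ is exactly the center of the blue sphere orthogonal to $S$ where the paper places each vertex, and the tangency of $x_ux_v$ to $S$ at the ideal vertex $p_e$ is the paper's key step, while your normalization (origin inside $P$) and your step (c) go beyond the paper's sketch by addressing what it leaves implicit, namely finiteness of the centers, $p_e$ actually lying on the segment, and convexity with face lattice $\Pi_0$ (your (c) is still only a plan, but the paper does not argue this at all). Two small fixes: in (b) the points $x_u,x_v$ lie on $H_f\cap H_g$, the common tangent of the red circles $D_f,D_g$ at $p_e$ (perpendicular to $C_u,C_v$ there), not on the common tangent of $C_u,C_v$; and disjointness of the planes of non-adjacent faces, which (c) needs, comes from the right angles (Andreev's lemma for non-obtuse polyhedra), not from convexity alone.
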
 

\begin{proof}
By Theorem \ref{Theorem2}, we can realize the rectification $\Pi$ of $\Pi_0$ as a right-angled, ideal hyperbolic polyhedron. We can color the faces of $\Pi$ blue if they correspond to vertices of $\Pi_0$ and red if they correspond to faces of $\Pi_0$. In the Poincar\'{e} ball model of $\mathbb{H}^3$, the faces of $\Pi$ are cut out by spheres in $\R^3$ which are orthogonal to the boundary sphere $S$. We now realize $\Pi_0$ in $\R^3$ by placing each vertex at the center of the corresponding blue sphere. Each edge of $\Pi_0$ is a segment between the centers of two tangent blue spheres. The point of tangency is an ideal vertex of $\Pi$. At this point, $S$ is orthogonal to both spheres, hence tangent to the edge connecting their centers. 
\end{proof}

Thurston's statement of the theorem is in terms of circle configurations with certain tangency properties. Here, the blue and red spheres intersect $S$ in a pair of circle configurations with tangencies governed by $\Pi_0$ and its dual.

\printbibliography

@article {Andreev,
    AUTHOR = {Andreev, E. M.},
     TITLE = {Convex polyhedra of finite volume in {L}oba\v{c}evski\u{i} space},
   JOURNAL = {Mat. Sb. (N.S.)},
    VOLUME = {83 (125)},
      YEAR = {1970},
     PAGES = {256--260},
   MRCLASS = {50.40 (52.00)},
  MRNUMBER = {0273510},
MRREVIEWER = {M. Decuyper},
}

@article {RHD,
    AUTHOR = {Roeder, Roland K. W. and Hubbard, John H. and Dunbar, William
              D.},
     TITLE = {Andreev's theorem on hyperbolic polyhedra},
   JOURNAL = {Ann. Inst. Fourier (Grenoble)},
  FJOURNAL = {Universit\'e{} de Grenoble. Annales de l'Institut Fourier},
    VOLUME = {57},
      YEAR = {2007},
    NUMBER = {3},
     PAGES = {825--882},
      ISSN = {0373-0956,1777-5310},
   MRCLASS = {51M10 (51F15 52B10 57M50)},
  MRNUMBER = {2336832},
MRREVIEWER = {Joan\ Porti},
       DOI = {10.5802/aif.2279},
       URL = {https://doi.org/10.5802/aif.2279},
}

@book {Stephenson,
    AUTHOR = {Stephenson, Kenneth},
     TITLE = {Introduction to circle packing},
      NOTE = {The theory of discrete analytic functions},
 PUBLISHER = {Cambridge University Press, Cambridge},
      YEAR = {2005},
     PAGES = {xii+356},
      ISBN = {978-0-521-82356-2; 0-521-82356-0},
   MRCLASS = {52C26 (30G25)},
  MRNUMBER = {2131318},
MRREVIEWER = {Fr\'{e}d\'{e}ric Math\'{e}us},
}

@book{Thurston,
  title={The Geometry and Topology of Three-manifolds},
  author={Thurston, W.P.},
  url={https://archive.org/details/ThurstonTheGeometryAndTopologyOfThreeManifolds},
  year={1979},
  publisher={Unpublished}
}

\end{document}